\newtheorem{theorem}{Theorem}[section]
\newtheorem{proposition}[theorem]{Proposition}
\theoremstyle{definition}
\theoremstyle{remark}
\numberwithin{equation}{section}
\begin{document}

\title[Double integral of logarithm and exponential function]{Double integral of logarithm and exponential function expressed in terms of the Lerch function}


\author{Robert Reynolds}
\address{Department of Mathematics and Statistics, York University, Faculty of Science, York University}
\email{milver@my.yorku.ca}

\author{Allan Stauffer}
\address{Department of Mathematics and Statistics, York University, Faculty of Science, York University}
\email{stauffer@yorku.ca}

\subjclass[2010]{Primary 30-02, 30D10, 30D30, 30E20, 11M35, 11M06, 01A55}

\keywords{Double integral; Lerch function; Contour integral; Glaisher's constant; Bessel function}

\date{}

\dedicatory{}

\begin{abstract}
In this work the authors use their contour integral method to derive a double integral connected to the modified Bessel function of the second kind and express it in terms of the Lerch function. There are some useful results relating double integrals of certain kinds of functions to ordinary integrals for which we know no general reference. Thus a table of integral pairs is given for interested readers. The majority of the results in this work are new.
\end{abstract}

\maketitle

\section{Introduction}

In 1986 A.P. Prudnikov \cite{prud} produced volume 1 of his five volume collection on Integrals and Series. In this work, the authors used their contour integral method and applied it to an interesting integral in the book of Prudnikov \cite{prud} and expressed its closed form in terms of the Lerch function. Double integrals play a significant role in the area mathematics, namely in the evaluation of moment of inertia, centre of mass, volumes of solids of revolution, averages and in error analysis of integral and discrete transforms \cite{jerri}. For certain values of the parameters the integrand of this derived integral formula involved the modified Bessel function of the second kind $K_{n}(z)$. This integral formula was then used to provide formal derivations in terms of  and new formulae in the form of a summary table of integrals. The Lerch function being a special function has the fundamental property of analytic continuation, which enables us to widen the range of evaluation for the parameters involved in our definite integral.  

The definite integral derived in this manuscript is given by
 
 \begin{equation}
\int_{0}^{\infty} \int_{0}^{\infty}x^m y^{-m-1} e^{-p x-q y-\frac{x^2}{4 y}} \log ^k\left(\frac{a x}{y}\right)dxdy
  \end{equation}

where the parameters $k$, $a$, $p$ and $q$ are general complex numbers and $-1<Re(m)\leq -1/2, -1<Im(m)<-1/2$. This work is important because the authors were unable to find similar derivations in current literature. The derivation of the definite integral follows the method used by us in \cite{reyn4} which involves Cauchy's integral formula. The generalized Cauchy's integral formula is given by

\begin{equation}\label{intro:cauchy}
\frac{y^k}{k!}=\frac{1}{2\pi i}\int_{C}\frac{e^{wy}}{w^{k+1}}dw.
\end{equation}

where $C$ is in general an open contour in the complex plane where the bilinear concomitant has the same value at the end points of the contour. This method involves using a form of equation (\ref{intro:cauchy}) then multiply both sides by a function, then take a definite integral of both sides. This yields a definite integral in terms of a contour integral. A second contour integral is derived by multiplying equation (\ref{intro:cauchy}) by a function and performing some substitutions so that the contour integrals are the same.

\section{Definite integral of the contour integral}

We use the method in \cite{reyn4}. The cut and contour are in the second quadrant of the complex $z$-plane.  The cut approaches the origin from the interior of the second quadrant and the contour goes round the origin with zero radius and is on opposite sides of the cut. Using equation (\ref{intro:cauchy}) we replace $y$ by $\log(\frac{ax}{y})$ then multiply by $x^m y^{-m-1} e^{-p x-q y-\frac{x^2}{4 y}}$. Next we take the double infinite integral over $x\in(0,\infty)$ and $y\in(0,\infty)$ to get

\begin{dmath}\label{dici}
\frac{1}{k!}\int_{0}^{\infty}\int_{0}^{\infty}x^m y^{-m-1} e^{-p x-q y-\frac{x^2}{4 y}} \log ^k\left(\frac{a x}{y}\right)dxdy
=\frac{1}{2\pi i}\int_{0}^{\infty}\int_{0}^{\infty}\int_{C}a^w w^{-k-1} x^{m+w} y^{-m-w-1} e^{-p x-q y-\frac{x^2}{4 y}}dwdxdy
=\frac{1}{2\pi i}\int_{C}\int_{0}^{\infty}\int_{0}^{\infty}a^w w^{-k-1} x^{m+w} y^{-m-w-1} e^{-p x-q y-\frac{x^2}{4 y}}dxdydw
=\frac{1}{2\pi i}\int_{C}\frac{\pi  a^w w^{-k-1} 2^{m+w+1} q^{\frac{m+w}{2}} \csc (\pi  (m+w)) \sinh
   \left((m+w) \cosh ^{-1}\left(\frac{p}{\sqrt{q}}\right)\right)}{\sqrt{p^2-q}}dw
\end{dmath}

from equation (3.1.3.48) in \cite{prud} where $|Re(m+w)|<1$. We are able to switch the order of integration over $z$, $x$ and $y$ using Fubini's theorem since the integrand is of bounded measure over the space $C \times \mathbb{R} \times \mathbb{R}$.

\section{The Lerch function}

We use (9.550) and (9.556) in \cite{grad} where $\Phi(z,s,v)$ is the Lerch function which is a generalization of the Hurwitz zeta $\zeta(s,v)$ and Polylogarithm functions $Li_{n}(z)$. The Lerch function has a series representation given by

\begin{equation}\label{knuth:lerch}
\Phi(z,s,v)=\sum_{n=0}^{\infty}(v+n)^{-s}z^{n}
\end{equation}

where $|z|<1, v \neq 0,-1,..$ and is continued analytically by its integral representation given by

\begin{equation}\label{knuth:lerch1}
\Phi(z,s,v)=\frac{1}{\Gamma(s)}\int_{0}^{\infty}\frac{t^{s-1}e^{-vt}}{1-ze^{-t}}dt=\frac{1}{\Gamma(s)}\int_{0}^{\infty}\frac{t^{s-1}e^{-(v-1)t}}{e^{t}-z}dt
\end{equation}

where $Re(v)>0$, or $|z| \leq 1, z \neq 1, Re(s)>0$, or $z=1, Re(s)>1$.

\section{Infinite sum of the contour integral}

In this section we will again use Cauchy's integral formula (\ref{intro:cauchy}) and taking the infinite sum to derive equivalent sum representations for the contour integrals. We proceed using equation (\ref{intro:cauchy}) to form two equations and take their difference. Firstly, replace $y\to y+x$ and multiply both sides by $e^{mx}$ and secondly, replace $x\to -x$ for the second and subtract to get

\begin{dmath}
\frac{e^{m x} (x+y)^k-e^{-m x} (y-x)^k}{2 k!}=\frac{1}{2\pi i}\int_{C}w^{-k-1} e^{w y} \sinh (x (m+w))dw
\end{dmath}

Next we replace $y \to \log (a)+\frac{\log (q)}{2}+i \pi  (2 y+1)+\log (2)$ and multiply both sides by $e^{i \pi  m (2 y+1)}$ and take the infinite sum over $y\in [0,\infty)$ to get

\begin{dmath}
\frac{2^{k-1} \pi ^k e^{-m x+\frac{1}{2} i \pi  (k+2 m)}}{k!} \left(e^{2 m x} \Phi \left(e^{2 i m \pi },-k,-\frac{2 i x+2 i \log (2 a)+i \log
   (q)-2 \pi }{4 \pi }\right)-\Phi \left(e^{2 i m \pi },-k,\frac{2 i x-2 i \log (2 a)-i \log (q)+2 \pi }{4 \pi }\right)\right)
   =\frac{1}{2\pi i}\sum_{y=0}^{\infty}\int_{C}2^w a^w w^{-k-1} q^{w/2} e^{i \pi  (2 y+1) (m+w)} \sinh (x
   (m+w))dw
   =\frac{1}{2\pi i}\int_{C}\sum_{y=0}^{\infty}2^w a^w w^{-k-1} q^{w/2} e^{i \pi  (2 y+1) (m+w)} \sinh (x
   (m+w))dw
   =\frac{1}{2\pi i}\int_{C}i 2^{w-1}
   a^w w^{-k-1} q^{w/2} \csc (\pi  (m+w)) \sinh (x (m+w))dw
\end{dmath}

from equation (1.232.3) in \cite{grad} where $Im(m+w)<0$.
Next we multiply by $-\frac{i \pi  2^{m+2} q^{m/2}}{\sqrt{p^2-q}}$ and replace $x\to \cosh ^{-1}\left(\frac{p}{\sqrt{q}}\right)$ simplifying to get

\begin{dmath}\label{isci}
\left(e^{2 m \cosh
   ^{-1}\left(\frac{p}{\sqrt{q}}\right)} \Phi \left(e^{2 i m \pi },-k,-\frac{2 i \cosh ^{-1}\left(\frac{p}{\sqrt{q}}\right)+2 i \log (2 a)+i
   \log (q)-2 \pi }{4 \pi }\right)-\Phi \left(e^{2 i m \pi },-k,\frac{2 i \cosh ^{-1}\left(\frac{p}{\sqrt{q}}\right)-2 i \log (2 a)-i \log
   (q)+2 \pi }{4 \pi }\right)\right)
   =\frac{1}{2\pi i}\int_{C}\frac{\pi  a^w w^{-k-1} 2^{m+w+1} q^{\frac{m}{2}+\frac{w}{2}} \csc (\pi  (m+w)) \sinh
   \left((m+w) \cosh ^{-1}\left(\frac{p}{\sqrt{q}}\right)\right)}{\sqrt{p^2-q}}dw
\end{dmath}

where $\bar{s}=-\frac{i \pi ^{k+1} 2^{k+m+1} q^{m/2} e^{-m \cosh ^{-1}\left(\frac{p}{\sqrt{q}}\right)+\frac{1}{2} i \pi  (k+2 m)}}{k! \sqrt{p^2-q}}$.

\section{Double integral in terms of the Lerch function}

\begin{theorem}
For $k,a,p,q \in\mathbb{C}, -1<Re(m)\leq -1/2, -1<Im(m)<-1/2$,
\begin{dmath}\label{dilf}
\int_{0}^{\infty}\int_{0}^{\infty}x^m y^{-m-1} e^{-p x-q y-\frac{x^2}{4 y}} \log ^k\left(\frac{a x}{y}\right)dxdy
=\bar{r}\left(e^{2 m \cosh ^{-1}\left(\frac{p}{\sqrt{q}}\right)} \Phi \left(e^{2 i
   m \pi },-k,-\frac{2 i \cosh ^{-1}\left(\frac{p}{\sqrt{q}}\right)+2 i \log (2 a)+i \log (q)-2 \pi }{4 \pi }\right)-\Phi \left(e^{2 i m \pi
   },-k,\frac{2 i \cosh ^{-1}\left(\frac{p}{\sqrt{q}}\right)-2 i \log (2 a)-i \log (q)+2 \pi }{4 \pi }\right)\right)
\end{dmath}

where $\bar{r}=k!\bar{s}=-\frac{i \pi ^{k+1} 2^{k+m+1} q^{m/2} e^{-m \cosh
   ^{-1}\left(\frac{p}{\sqrt{q}}\right)+\frac{1}{2} i \pi  (k+2 m)} }{\sqrt{p^2-q}}$
\end{theorem}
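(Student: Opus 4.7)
The plan is to prove the theorem by observing that Sections 2 and 4 have each expressed a certain quantity as the \emph{same} contour integral over $C$, and then to equate the two resulting representations. Specifically, equation (\ref{dici}) gives
\[
\frac{1}{k!}\int_0^\infty\!\!\int_0^\infty x^m y^{-m-1} e^{-px-qy-x^2/(4y)} \log^k(ax/y)\,dx\,dy = \frac{1}{2\pi i}\int_C F(w)\,dw,
\]
where $F(w)$ is the final integrand displayed in (\ref{dici}). Tracking the multiplication by $-i\pi 2^{m+2}q^{m/2}/\sqrt{p^2-q}$ and the substitution $x\to\cosh^{-1}(p/\sqrt{q})$ that produce (\ref{isci}) shows that the same $F(w)$ appears there, so that effectively
\[
\bar{s}\bigl(e^{2m\cosh^{-1}(p/\sqrt{q})}\Phi(\ldots) - \Phi(\ldots)\bigr) = \frac{1}{2\pi i}\int_C F(w)\,dw.
\]
Equating the two left-hand sides and multiplying by $k!$, using $\bar{r} = k!\bar{s}$, yields (\ref{dilf}).

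The technical content of the proof is to verify that both representations hold under the stated hypotheses. First I would recheck the chain of equalities in (\ref{dici}): the interchange of integration orders is already asserted via Fubini's theorem on $C\times(0,\infty)\times(0,\infty)$, and the inner $x,y$-integral evaluates via formula (3.1.3.48) of Prudnikov under the convergence condition $|\mathrm{Re}(m+w)|<1$. Next I would recheck (\ref{isci}): the interchange of $\sum_{y=0}^{\infty}$ and $\int_C$ must be justified (dominated convergence suffices), the geometric series $\sum_{y=0}^{\infty} e^{i\pi(2y+1)(m+w)}$ summed via (1.232.3) of Gradshteyn--Ryzhik (which requires $\mathrm{Im}(m+w)<0$), and the substitution $y\to\log(2a)+\tfrac{\log q}{2}+i\pi(2y+1)$ together with the multiplier $e^{i\pi m(2y+1)}$ must be shown to produce precisely the two Lerch functions in (\ref{isci}) by matching with the series definition (\ref{knuth:lerch}).

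The principal obstacle is the joint convergence question: one must exhibit a single contour $C$ along which both $|\mathrm{Re}(m+w)|<1$ and $\mathrm{Im}(m+w)<0$ hold uniformly, so that the two representations of the contour integral are simultaneously legitimate. The hypotheses $-1<\mathrm{Re}(m)\le -1/2$ and $-1<\mathrm{Im}(m)<-1/2$ are tailored precisely so that the contour in the second quadrant, approaching the origin as described at the start of Section 2, meets both inequalities along its entire length. Once both representations are validated on this common contour, the identification of the two contour integrals is immediate, and the Lerch integral representation (\ref{knuth:lerch1}) then provides analytic continuation of the resulting identity to the full parameter range on which both sides are defined.
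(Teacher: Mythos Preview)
Your proposal is correct and follows exactly the paper's approach: the paper's proof is the one-line observation that the right-hand sides of (\ref{dici}) and (\ref{isci}) coincide, so the left-hand sides are equal, and then one clears the factorial via $\bar{r}=k!\bar{s}$. Your additional discussion of the convergence and contour conditions goes beyond what the paper actually verifies, but the underlying argument is identical.
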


\begin{proof}
Since the right-hand sides of equations (\ref{dici}) and (\ref{isci}) are equal we can equate the left-hand sides and simplify the factorial to get the stated result.
\end{proof}

\section*{Main Results}

\section{Derivation of entry 3.1.3.48 in \cite{prud}}

\begin{proposition}
For $p,q \in\mathbb{C}, -1<Re(m)\leq -1/2, -1<Im(m)<-1/2$,
\begin{dmath}
\int_{0}^{\infty}\int_{0}^{\infty}x^m y^{-m-1} e^{-p x-q y-\frac{x^2}{4 y}}dxdy=\frac{\pi  2^{m+1} q^{m/2} \csc (\pi  m) \sinh \left(m \cosh
   ^{-1}\left(\frac{p}{\sqrt{q}}\right)\right)}{\sqrt{p^2-q}}
\end{dmath}
\end{proposition}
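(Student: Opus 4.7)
The plan is to obtain this proposition as the $k=0$ specialization of the main theorem (equation \ref{dilf}). When $k=0$, the factor $\log^k(ax/y)=1$ disappears from the integrand on the left-hand side, giving exactly the double integral in the proposition, while on the right-hand side every occurrence of $a$ should drop out because the Lerch function at $s=0$ is independent of its third argument.

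Concretely, I would first substitute $k=0$ into $\bar{r}$, which collapses to
\[
\bar{r}\big|_{k=0}=-\frac{i\pi\,2^{m+1}q^{m/2}e^{-m\cosh^{-1}(p/\sqrt{q})+i\pi m}}{\sqrt{p^{2}-q}}.
\]
Next I would invoke the elementary identity $\Phi(z,0,v)=\sum_{n=0}^{\infty}z^{n}=(1-z)^{-1}$, valid for $|z|<1$ (and extended by analytic continuation), so that both Lerch terms on the right-hand side of \eqref{dilf} collapse to $(1-e^{2i\pi m})^{-1}$, leaving the combination
\[
\frac{e^{2m\cosh^{-1}(p/\sqrt{q})}-1}{1-e^{2i\pi m}}.
\]

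I would then rewrite the numerator as $2e^{m\cosh^{-1}(p/\sqrt{q})}\sinh\!\bigl(m\cosh^{-1}(p/\sqrt{q})\bigr)$ and the denominator as $-2ie^{i\pi m}\sin(\pi m)$. Multiplying by $\bar{r}\big|_{k=0}$ causes the exponentials $e^{\pm m\cosh^{-1}(p/\sqrt{q})}$ and $e^{\pm i\pi m}$ to cancel in pairs, and the remaining factor of $-i/(-2i)$ turns into $1/2$, which combines with the $2\sinh$ to remove both twos. The surviving expression is precisely
\[
\frac{\pi\,2^{m+1}q^{m/2}\csc(\pi m)\sinh\!\bigl(m\cosh^{-1}(p/\sqrt{q})\bigr)}{\sqrt{p^{2}-q}}.
\]

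The main obstacle is purely bookkeeping: tracking the imaginary units and the exponentials in $\bar{r}$ alongside those produced by rewriting the difference and quotient as $\sinh$ and $\sin$, to verify that nothing is left over. There is no analytic issue, since the parameter constraints $-1<\mathrm{Re}(m)\le -1/2$, $-1<\mathrm{Im}(m)<-1/2$ ensure $|e^{2i\pi m}|<1$ so the Lerch series used above converges, and the $k=0$ reduction needs no further justification beyond the hypotheses already imposed in the theorem.
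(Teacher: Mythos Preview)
Your approach is exactly the paper's: set $k=0$ in \eqref{dilf} and collapse both Lerch terms via $\Phi(z,0,v)=1/(1-z)$ (the paper cites this as entry (2) below (64:12:7) in \cite{atlas}), then simplify. Your algebra is correct.

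One slip in your closing paragraph: under the stated hypothesis $-1<\mathrm{Im}(m)<-1/2$ one has $|e^{2i\pi m}|=e^{-2\pi\,\mathrm{Im}(m)}>1$, not $<1$, so the series for $\Phi$ does \emph{not} converge; the identity $\Phi(z,0,v)=1/(1-z)$ must be invoked via analytic continuation (which you already mention parenthetically), not via the series. This does not affect the argument, only its justification.
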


\begin{proof}
Use equation (\ref{dilf}) and set $k=0$ and simplify using entry (2) in table below (64:12:7) in \cite{atlas}.
\end{proof}

\section{Derivation of new entry 3.1.3.59 in \cite{prud}}

\begin{proposition}
\begin{dmath}
\int_{0}^{\infty}\int_{0}^{\infty}x^m y^{-m-1} \log \left(\frac{a x}{y}\right) e^{-p x-q y-\frac{x^2}{4 y}}dxdy
=-\frac{\pi  2^m q^{m/2} \csc (\pi  m) }{\sqrt{p^2-q}}\left((-2 \log (a)+2 \pi
    \cot (\pi  m)-\log (4 q)) \sinh \left(m \cosh ^{-1}\left(\frac{p}{\sqrt{q}}\right)\right)-2 \cosh ^{-1}\left(\frac{p}{\sqrt{q}}\right)
   \cosh \left(m \cosh ^{-1}\left(\frac{p}{\sqrt{q}}\right)\right)\right)
\end{dmath}
\end{proposition}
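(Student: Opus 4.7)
The plan is to specialize Theorem 5.1 (equation (\ref{dilf})) to $k=1$ and reduce the two Lerch functions to elementary form. The appropriate reduction, obtained by differentiating the geometric series $\Phi(z,0,v)=1/(1-z)$ that was used in the preceding proposition, is
\[
\Phi(z,-1,v)=\frac{v}{1-z}+\frac{z}{(1-z)^2},
\]
valid for $z\neq 1$ by analytic continuation (and recorded as a consequence of the entry in \cite{atlas} cited above).

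Write $C:=\cosh^{-1}(p/\sqrt{q})$ and let $v_1,v_2$ denote the two arguments of the Lerch function in (\ref{dilf}), so that $v_1=\tfrac12-\tfrac{i}{4\pi}(2C+\log(4a^2q))$ and $v_2=\tfrac12+\tfrac{i}{4\pi}(2C-\log(4a^2q))$. Using $1-e^{2i\pi m}=-2ie^{i\pi m}\sin(\pi m)$, which gives $(1-e^{2i\pi m})^2=-4e^{2i\pi m}\sin^2(\pi m)$, the Lerch combination reduces to
\[
e^{2mC}\Phi(e^{2i\pi m},-1,v_1)-\Phi(e^{2i\pi m},-1,v_2)
=\frac{ie^{-i\pi m}}{2\sin(\pi m)}\bigl(e^{2mC}v_1-v_2\bigr)-\frac{e^{2mC}-1}{4\sin^2(\pi m)}.
\]
Splitting the coefficient of $C$ from the coefficient of $\log(4a^2q)$ in $e^{2mC}v_1-v_2$ and using $e^{2mC}\pm 1=2e^{mC}\{\sinh(mC)\text{ or }\cosh(mC)\}$, one obtains
\[
e^{2mC}v_1-v_2=e^{mC}\sinh(mC)-\frac{ie^{mC}}{2\pi}\bigl[2C\cosh(mC)+\log(4a^2q)\sinh(mC)\bigr].
\]

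The final step is to multiply by $\bar r$ specialized to $k=1$, namely $\bar r=\frac{\pi^2\,2^{m+2}q^{m/2}e^{-mC}e^{i\pi m}}{\sqrt{p^2-q}}$ (after using $e^{i\pi(1+2m)/2}=ie^{i\pi m}$), expand $e^{i\pi m}/\sin^2(\pi m)=\csc(\pi m)\cot(\pi m)+i\csc(\pi m)$, and cancel the $e^{\pm mC}$ factors. Two imaginary contributions appear—one from the constant part via $e^{i\pi m}/\sin^2(\pi m)$ and one from the $\sinh(mC)$ piece of the linear part—and they cancel exactly. Collecting the surviving real terms and using $\log(4a^2q)=2\log a+\log(4q)$ produces the claimed closed form with prefactor $-\pi 2^m q^{m/2}\csc(\pi m)/\sqrt{p^2-q}$. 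The main obstacle throughout is the real/imaginary bookkeeping: one must carefully track the factor $i$ coming from $e^{i\pi(1+2m)/2}$ in $\bar r$, the $ie^{-i\pi m}/(2\sin\pi m)$ coming from $1/(1-e^{2i\pi m})$, and the imaginary parts of $v_1,v_2$, and then verify that all imaginary contributions cancel before identifying the remaining real expression with the target.
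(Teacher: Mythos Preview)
Your proof is correct and follows essentially the same approach as the paper: specialize equation~(\ref{dilf}) to $k=1$ and reduce $\Phi(z,-1,v)$ to its elementary closed form, which is precisely the content of the table entry from \cite{atlas} that the paper invokes. Your writeup makes explicit the algebraic bookkeeping (the identification of $v_1,v_2$, the evaluation of $1/(1-e^{2i\pi m})$ and its square, and the cancellation of imaginary parts) that the paper's one-line proof leaves to the reader.
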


\begin{proof}
Use equation (\ref{dilf}) and set $k=1$ and simplify using entry (1) in table below (64:12:7) in \cite{atlas}.
\end{proof}

\section{Derivation of new entry 3.1.3.60 in \cite{prud}}

In this example we look at the Laplace transform of the first partial derivative with respect to $\nu$ of the modified Bessel function of the second kind $K_{\nu}(x)$ where $\nu=-1/2,x=x/2$. Details about this function are listed in equation (51:4:1) in \cite{atlas} and equation (11.123a) in \cite{weber}.

\begin{proposition}
\begin{dmath}
\int_{0}^{\infty}\int_{0}^{\infty}\frac{e^{-\frac{x^2+4 x y+y^2}{4 y}} \log \left(\frac{x}{y}\right)}{\sqrt{x} \sqrt{y}}dxdy=-2 \sqrt{2} \pi  \cosh ^{-1}(2)
\end{dmath}
\end{proposition}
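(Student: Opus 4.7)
The plan is to specialize the master formula (\ref{dilf}) from the theorem to the parameter values $m=-1/2$, $a=1$, $p=1$, $q=1/4$, $k=1$. First I would verify that with these choices the integrand on the left of (\ref{dilf}) matches the integrand of the proposition: the prefactor $x^m y^{-m-1}$ becomes $x^{-1/2}y^{-1/2}$, the logarithm is $\log(x/y)$, and the exponent $-px-qy-x^2/(4y)=-x-y/4-x^2/(4y)$ equals $-(x^2+4xy+y^2)/(4y)$ after putting over a common denominator. (The boundary case $m=-1/2$ falls on the edge of the stated parameter region, so I would appeal to continuity/analytic continuation in $m$, which is exactly how the other propositions in this section are handled.)

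Next I would evaluate the prefactor $\bar r$ at these parameters. The key computations are $\cosh^{-1}(p/\sqrt q)=\cosh^{-1}(2)$, $\sqrt{p^2-q}=\sqrt{3}/2$, $q^{m/2}=\sqrt{2}$, $e^{-m\cosh^{-1}(2)}=\sqrt{2+\sqrt 3}$, and $e^{(i\pi/2)(k+2m)}=1$. Collecting factors yields $\bar r=-8i\pi^{2}\sqrt{2+\sqrt 3}/\sqrt 3$. Simultaneously the two Lerch arguments simplify dramatically because $2\log(2a)+\log q=2\log 2-2\log 2=0$, leaving the two arguments as complex conjugates $v=\tfrac12+i\cosh^{-1}(2)/(2\pi)$ and $\bar v=\tfrac12-i\cosh^{-1}(2)/(2\pi)$.

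The crucial simplification is then the evaluation of $\Phi(-1,-1,v)$, which is the content of entry (1) of the Atlas table cited. Using $\Phi(-1,-1,v)=v/2-1/4$ (obtained from the distributional generating function $\sum(v+n)z^{n}=v/(1-z)+z/(1-z)^{2}$ at $z=-1$, or equivalently from the Hurwitz-zeta decomposition $\Phi(-1,-1,v)=2[\zeta(-1,v/2)-\zeta(-1,(v+1)/2)]$ with $\zeta(-1,a)=-\tfrac12(a^{2}-a+\tfrac16)$), I would form the combination
\[
e^{-\cosh^{-1}(2)}\Phi(-1,-1,\bar v)-\Phi(-1,-1,v),
\]
use $e^{-\cosh^{-1}(2)}=2-\sqrt 3$, and watch the real parts cancel. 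A short calculation reduces this difference to $-i(3-\sqrt 3)\cosh^{-1}(2)/(4\pi)$.

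Multiplying $\bar r$ by this difference produces $-2\pi(\sqrt 3-1)\sqrt{2+\sqrt 3}\,\cosh^{-1}(2)$, after using $(3-\sqrt 3)/\sqrt 3=\sqrt 3-1$. The final step is the nested-radical identity $(\sqrt 3-1)\sqrt{2+\sqrt 3}=\sqrt 2$, which one verifies by squaring: $(\sqrt 3-1)^{2}(2+\sqrt 3)=(4-2\sqrt 3)(2+\sqrt 3)=8-6=2$. This collapses the expression to $-2\sqrt 2\,\pi\,\cosh^{-1}(2)$, as claimed. I expect the main obstacle to be simply the bookkeeping in Step 3: keeping track of the two factors of $i$ (one from $\bar r$, one from the Lerch difference) so that their product is real with the correct sign, and recognizing the nested radical identity that makes $\sqrt{2+\sqrt 3}$ disappear.
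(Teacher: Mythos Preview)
Your proposal is correct and follows essentially the same route as the paper: specialize the master formula (\ref{dilf}) at $m=-1/2$, $a=1$, $k=1$, $p=1$, $q=1/4$, then reduce the Lerch values at $z=e^{2i m\pi}=-1$ via the Hurwitz-zeta decomposition. The paper cites Atlas entries for the $\Phi(-1,\cdot,\cdot)\leftrightarrow\zeta(\cdot,\cdot)$ relation and the value of $\zeta(-1,a)$, whereas you collapse these into the single closed form $\Phi(-1,-1,v)=v/2-1/4$ and then finish with the explicit nested-radical identity $(\sqrt3-1)\sqrt{2+\sqrt3}=\sqrt2$; this is only a cosmetic difference in bookkeeping, not a different argument.
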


\begin{proof}
Use equation (\ref{dilf}) and set $m=-1/2,a=1$ and simplify in terms of the Hurwitz zeta function using entry (4) in table below (64:12:7) in \cite{atlas}. Next set $k=p=1,q=1/4$ and simplify using entry (2) in table below (64:4:1) in \cite{atlas}.
\end{proof}

\section{Derivation of new entry 3.1.3.61 in \cite{prud}}

\begin{proposition}
\begin{dmath}
\int_{0}^{\infty}\int_{0}^{\infty}\frac{e^{-\frac{x^2+4 x y+y^2}{4 y}} \log ^2\left(\frac{x}{y}\right)}{\sqrt{x} \sqrt{y}}dxdy=2 \sqrt{\frac{2}{3}} \pi  \left(\pi ^2+(\cosh^{-1}(2))^2\right)
\end{dmath}
\end{proposition}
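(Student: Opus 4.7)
The plan is to specialize the master identity (\ref{dilf}) to $k=2$, $m=-1/2$, $a=1$, $p=1$, $q=1/4$, mirroring the proof of the preceding proposition but with $k=2$ in place of $k=1$. These parameters satisfy the hypotheses of the theorem and reduce the left-hand side to exactly the integrand under consideration, since $\frac{x^2+4xy+y^2}{4y}=px+qy+\frac{x^2}{4y}$ with $p=1$, $q=1/4$, and $x^{m}y^{-m-1}=x^{-1/2}y^{-1/2}$.

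First I would compute the prefactor. With $c:=\cosh^{-1}(2)=\cosh^{-1}(p/\sqrt{q})$ one has $\sqrt{p^{2}-q}=\sqrt{3}/2$, $q^{m/2}=\sqrt{2}$, $2^{k+m+1}=2^{5/2}$ and $e^{(1/2)i\pi(k+2m)}=e^{i\pi/2}=i$, giving $\bar r=16\pi^{3}e^{c/2}/\sqrt{3}$. Next, with $m=-1/2$ we have $e^{2im\pi}=-1$, and the two Lerch arguments collapse dramatically: the $\log(2a)$ and $\tfrac12\log q$ pieces cancel since $a=1$ and $q=1/4$, leaving
\[
v_{1}=\tfrac{1}{2}-\tfrac{ic}{2\pi},\qquad v_{2}=\tfrac{1}{2}+\tfrac{ic}{2\pi}.
\]

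The key analytic step is to evaluate $\Phi(-1,-2,v)$. Following the previous proof, I would first invoke entry (4) in the table below (64:12:7) of \cite{atlas} to write $\Phi(-1,-k,v)=2^{k}[\zeta(-k,v/2)-\zeta(-k,(v+1)/2)]$, and then apply entry (2) in the table below (64:4:1) of \cite{atlas}, i.e.\ the Bernoulli-polynomial evaluation $\zeta(-k,v)=-B_{k+1}(v)/(k+1)$. For $k=2$, $B_{3}(v)=v^{3}-\tfrac{3}{2}v^{2}+\tfrac{1}{2}v$, and the telescoping between $B_{3}(v/2)$ and $B_{3}((v+1)/2)$ collapses to the clean closed form $\Phi(-1,-2,v)=v(v-1)/2$. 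Substituting $v_{1}$ and $v_{2}$ (which are complex conjugates of $1/2$-real-part type) gives the equal value $\Phi(-1,-2,v_{j})=-(\pi^{2}+c^{2})/(8\pi^{2})$ for $j=1,2$.

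Finally I would combine everything. The bracket in (\ref{dilf}) becomes $(e^{-c}-1)\bigl(-(\pi^{2}+c^{2})/(8\pi^{2})\bigr)=(1-e^{-c})(\pi^{2}+c^{2})/(8\pi^{2})$, and multiplication by $\bar r$ produces
\[
\frac{16\pi^{3}e^{c/2}}{\sqrt{3}}\cdot\frac{(1-e^{-c})(\pi^{2}+c^{2})}{8\pi^{2}}
=\frac{4\pi(\pi^{2}+c^{2})}{\sqrt{3}}\cdot 2\sinh(c/2),
\]
and since $\sinh(c/2)=\sqrt{(\cosh c-1)/2}=1/\sqrt{2}$, this simplifies to $\frac{4\pi}{\sqrt{6}}(\pi^{2}+c^{2})=2\sqrt{2/3}\,\pi(\pi^{2}+(\cosh^{-1}(2))^{2})$, as claimed. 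The only genuine subtlety is the analytic continuation of $\Phi(-1,-2,v)$ (its defining series diverges at $s=-2$), but the route through Hurwitz zeta and $B_{3}$ cited from \cite{atlas} makes this rigorous; all other steps are bookkeeping.
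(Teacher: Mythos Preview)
Your proposal is correct and follows essentially the same route as the paper: specialize (\ref{dilf}) at $m=-1/2$, $a=1$, $p=1$, $q=1/4$, $k=2$, pass from $\Phi(-1,-2,\cdot)$ to Hurwitz zeta via entry~(4) below (64:12:7) in \cite{atlas}, and then evaluate $\zeta(-2,\cdot)$ in closed form (the paper cites entry~(3) rather than~(2) below (64:4:1), but the content is the same Bernoulli-polynomial identity). One cosmetic slip: in your displayed line the factor should read $\frac{2\pi(\pi^{2}+c^{2})}{\sqrt{3}}\cdot 2\sinh(c/2)$, not $\frac{4\pi(\pi^{2}+c^{2})}{\sqrt{3}}\cdot 2\sinh(c/2)$; your final simplification to $\frac{4\pi}{\sqrt{6}}(\pi^{2}+c^{2})$ is nonetheless correct.
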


\begin{proof}
Use equation (\ref{dilf}) and set $m=-1/2,a=1$ and simplify in terms of the Hurwitz zeta function using entry (4) in table below (64:12:7) in \cite{atlas}. Next set $k=2,p=1,q=1/4$ and simplify using entry (3) in table below (64:4:1) in \cite{atlas}.
\end{proof}

\section{Derivation of new entry 3.1.3.62 in \cite{prud}}

\begin{proposition}
For $k\in\mathbb{C}$,
\begin{dmath}\label{prop10}
\int_{0}^{\infty}\int_{0}^{\infty}\frac{e^{-\frac{x^2+x y+y^2}{4 y}} (x-y) \log ^k\left(\frac{x}{y}\right)}{\sqrt{x} y^{3/2}}dxdy=i 2^{2 k+3} e^{\frac{i \pi  k}{2}} \pi
   ^{k+1} \left(\zeta \left(-k,\frac{1}{6}\right)+\zeta \left(-k,\frac{5}{6}\right)+\left(1-3^{-k}\right) \zeta (-k)\right)
\end{dmath}
\end{proposition}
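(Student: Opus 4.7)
My plan is to split the integrand into two pieces matching two specializations of the main theorem. Specifically, observe
\[
\frac{x-y}{\sqrt{x}\,y^{3/2}} = x^{1/2} y^{-3/2} - x^{-1/2} y^{-1/2},
\]
which is exactly the difference of the kernel $x^m y^{-m-1}$ at $m=1/2$ and at $m=-1/2$. So I would apply equation (\ref{dilf}) twice, with $a=1$, $p=q=1/4$, once at $m=1/2$ (by analytic continuation in $m$) and once at $m=-1/2$, and subtract.

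Next I would simplify the auxiliary constants. With $p=q=1/4$ one gets $\cosh^{-1}(p/\sqrt{q})=\cosh^{-1}(1/2)=i\pi/3$ and $\sqrt{p^{2}-q}=i\sqrt{3}/4$. The two arguments of the Lerch functions in (\ref{dilf}) do not depend on $m$, and the contributions $2i\log(2a)$ and $i\log(q)$ cancel (since $a=1$, $\log q=-2\log 2$), leaving the clean values $2/3$ and $1/3$. Moreover, $e^{2im\pi}=-1$ for both $m=\pm 1/2$, so in both specializations the Lerch functions collapse to $\Phi(-1,-k,1/3)$ and $\Phi(-1,-k,2/3)$. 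Computing $\bar r(1/2)$ and $\bar r(-1/2)$ explicitly, taking the difference and using $2\cos(\pi/6)=\sqrt 3$, I expect the coefficient of $\Phi(-1,-k,2/3)$ to become $-i\pi^{k+1}2^{k+3}e^{i\pi k/2}$ and that of $\Phi(-1,-k,1/3)$ to become $+i\pi^{k+1}2^{k+3}e^{i\pi k/2}$.

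Finally I would convert to Hurwitz zeta using entry (4) in the table below (64:12:7) in \cite{atlas}, namely $\Phi(-1,s,v)=2^{-s}\bigl(\zeta(s,v/2)-\zeta(s,(v+1)/2)\bigr)$, which produces values at $1/6,\,1/3,\,2/3,\,5/6$. The $\zeta(-k,1/6)$ and $\zeta(-k,5/6)$ terms are already visible in the stated answer; the unwanted pieces $\zeta(-k,1/3)+\zeta(-k,2/3)$ collapse via the distribution identity $\zeta(s,1/3)+\zeta(s,2/3)=(3^{s}-1)\zeta(s)$ (at $s=-k$) to $(1-3^{-k})\zeta(-k)$, yielding exactly the bracket in the right-hand side and matching the overall prefactor $i\,2^{2k+3}e^{i\pi k/2}\pi^{k+1}$ after pairing $2^{k+3}\cdot 2^{k}=2^{2k+3}$.

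The main obstacle is justifying use of (\ref{dilf}) at $m=1/2$, which lies outside the stated domain $-1<Re(m)\le -1/2$, $-1<Im(m)<-1/2$. Both sides of (\ref{dilf}) are meromorphic in $m$, with the only potential singularities coming from $\csc(\pi(m+w))$ on the contour and from the absolute convergence threshold of the double integral; at $p=q=1/4$ neither obstructs the point $m=1/2$, so the identity extends to a neighbourhood of both $\pm 1/2$ by analytic continuation. The remaining steps are bookkeeping: collecting the $\bar r$ prefactors, recognizing $\sqrt 3$ from $2\cos(\pi/6)$, and applying the two closed-form identities listed above.
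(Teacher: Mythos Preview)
Your approach is correct and in fact more complete than the paper's own sketch. The paper writes ``set $a=1,\ m=-1/2,\ q=p$, then $p=1/4$, and simplify'', but a single application of~(\ref{dilf}) at $m=-1/2$ produces the kernel $x^{-1/2}y^{-1/2}$, not $(x-y)x^{-1/2}y^{-3/2}$; the factor $x-y$ never appears. Your decomposition $(x-y)x^{-1/2}y^{-3/2}=x^{1/2}y^{-3/2}-x^{-1/2}y^{-1/2}$ and the subtraction of the two specializations $m=\pm\tfrac12$ is precisely what is needed to match the left-hand side, so your argument fills a gap in the paper's exposition (compare Section~12, where the paper \emph{does} spell out the ``form a second equation and take their difference'' step). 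Your analytic-continuation remark for $m=1/2$ is also appropriate: the underlying evaluation (3.1.3.48) in Prudnikov holds for $|\mathrm{Re}(m)|<1$, so the restriction in Theorem~5.1 is an artefact of the contour derivation rather than of the identity itself.

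Two minor corrections to your write-up. First, the distribution identity at $s=-k$ reads $\zeta(-k,\tfrac13)+\zeta(-k,\tfrac23)=(3^{-k}-1)\zeta(-k)$; the sign becomes $(1-3^{-k})\zeta(-k)$ only because these two Hurwitz values enter $\Phi(-1,-k,\tfrac13)-\Phi(-1,-k,\tfrac23)$ with an overall minus sign, so your conclusion is right but the intermediate sentence should be rephrased. Second, the cancellation of $\sqrt3$ you anticipate actually arises via $e^{2i\pi/3}-e^{-2i\pi/3}=e^{i\pi/3}-e^{-i\pi/3}=i\sqrt3$ against $\sqrt{p^{2}-q}=i\sqrt3/4$ in $\bar r$; after this the coefficients $\pm i\,2^{k+3}\pi^{k+1}e^{i\pi k/2}$ of $\Phi(-1,-k,\tfrac13)$ and $\Phi(-1,-k,\tfrac23)$ and the final prefactor $i\,2^{2k+3}e^{i\pi k/2}\pi^{k+1}$ fall out exactly as you state.
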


\begin{proof}
Use equation (\ref{dilf}) and set $a=1,m=-1/2,q=p$ and simplify in terms of the Hurwitz zeta functions using entry (4) in table below (64:12:7) in \cite{atlas}. Next set $p=1/4$ and simplify using entry (2) in table below (64:7) in \cite{atlas}.
\end{proof}

\section{Derivation of new entry 3.1.3.63 in \cite{prud}}

\begin{proposition}
\begin{dmath}
\int_{0}^{\infty}\int_{0}^{\infty}\frac{e^{-\frac{x^2+x y+y^2}{4 y}} (x-y) \log \left(\frac{x}{y}\right) \log \left(\log \left(\frac{x}{y}\right)\right)}{\sqrt{x}
   y^{3/2}}dxdy\\
   =\frac{4}{9} \pi ^2 \left(6+3 \pi  i+\log \left(\frac{2^{14} 3^3 \pi ^6}{A^{72}}\right)\right)
\end{dmath}
\end{proposition}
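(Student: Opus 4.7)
\emph{Plan of proof.} The plan is to differentiate Proposition (entry 3.1.3.62), equation (\ref{prop10}), with respect to the parameter $k$ and then specialise to $k=1$. Since $\partial_k \log^k z = \log^k(z)\, \log\log z$, differentiating the left-hand side of (\ref{prop10}) under the double integral (justified by dominated convergence: the Gaussian--linear exponential in $x,y$ dominates any power of $\log(x/y)$ for $k$ in a small complex neighbourhood of $1$) and then setting $k=1$ reproduces exactly the integrand of the present statement.

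On the right-hand side I would write the formula as $f(k)\,g(k)$ with
\[
f(k) = i\,2^{2k+3}\,e^{i\pi k/2}\,\pi^{k+1},\qquad
g(k) = \zeta\!\left(-k,\tfrac{1}{6}\right) + \zeta\!\left(-k,\tfrac{5}{6}\right) + \bigl(1-3^{-k}\bigr)\zeta(-k).
\]
A logarithmic derivative gives $f'(k)/f(k)=\log(4\pi)+i\pi/2$, with $f(1)=-32\pi^2$. Using $\zeta(-1,a)=-\tfrac12 B_2(a)$ with $B_2(a)=a^2-a+\tfrac16$ one reads off $\zeta(-1,1/6)=\zeta(-1,5/6)=-1/72$ and $\zeta(-1)=-1/12$, whence $g(1)=-1/12$.

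For $g'(1)$ the cleanest route is the Hurwitz multiplication identity $\sum_{j=0}^{5}\zeta(s,(j+1)/6)=6^s\zeta(s)$: stripping off the terms with denominators $2$ and $3$ yields the compact relation
\[
\zeta\!\left(s,\tfrac{1}{6}\right)+\zeta\!\left(s,\tfrac{5}{6}\right) = (2^s-1)(3^s-1)\,\zeta(s).
\]
Differentiating this in $s$ at $s=-1$ expresses $\zeta'(-1,1/6)+\zeta'(-1,5/6)$ as a short linear combination of $\log 2$, $\log 3$, $\zeta(-1)$ and $\zeta'(-1)$, and Kinkelin's identity $\zeta'(-1)=\tfrac1{12}-\log A$ introduces the Glaisher--Kinkelin constant $A$ that appears in the final answer. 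The extra term $(1-3^{-k})\zeta(-k)$ in $g$ contributes the mild correction $3^{-k}\log 3\cdot\zeta(-k)-(1-3^{-k})\zeta'(-k)$, which at $k=1$ is straightforward to evaluate.

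The main obstacle I expect is the bookkeeping in the final assembly. The right-hand side of the proposition equals $f'(1)g(1)+f(1)g'(1)$, a $\pi^2$-multiple of a linear combination of $1$, $i\pi$, $\log 2$, $\log 3$, $\log\pi$ and $\log A$. All six coefficients must condense exactly into $\tfrac49\pi^2\bigl(6+3\pi i+\log(2^{14}3^3\pi^6/A^{72})\bigr)$; in particular the $\log(4\pi)=2\log 2+\log\pi$ contributed by $f'/f$ must blend cleanly with the $\log 2$, $\log 3$ and $\log A$ produced by differentiating $(2^s-1)(3^s-1)\zeta(s)$ and by Kinkelin's identity. This is the step where a single arithmetic slip would destroy the clean closed form, so careful tracking of the three rational prefactors $1/12$, $1/36$, $1/72$ that appear from $\zeta(-1,a)$ and $\zeta(-1)$ is essential.
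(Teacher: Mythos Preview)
Your proposal is correct and follows the paper's own approach: differentiate equation~(\ref{prop10}) with respect to $k$ and set $k=1$. The only difference is in the final simplification---the paper invokes formulas (A11) and (A12) of Voros for the Hurwitz-zeta derivatives, whereas your use of the multiplication identity $\zeta(s,\tfrac16)+\zeta(s,\tfrac56)=(2^s-1)(3^s-1)\zeta(s)$ collapses $g(k)$ to $(3^{-k}-1)(2^{-k}-2)\zeta(-k)$ and reduces everything to Kinkelin's relation $\zeta'(-1)=\tfrac1{12}-\log A$, which is a slightly more self-contained route to the same closed form.
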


\begin{proof}
Use equation (\ref{prop10}) and take the first partial derivative with respect to $k$ then set $k=1$ and simplify using equations (A11) and (A12) in \cite{voros}.
\end{proof}

\section{Derivation of new entry 3.1.3.64 in \cite{prud}}

\begin{proposition}
\begin{dmath}
\int_{0}^{\infty}\int_{0}^{\infty}\frac{e^{-\frac{x^2+x y+y^2}{4 y}} \left(\sqrt[4]{y}-\sqrt[4]{x}\right)}{x^{3/4} \sqrt{y} \log \left(\frac{x}{y}\right)}dxdy=4 \log
   \left(4-2 \sqrt{3}\right)
\end{dmath}
\end{proposition}

\begin{proof}
Use equation (\ref{dilf}) and replace $m\to t$ to form a second equation and take their difference. Next set $m=-1/2,t=-3/4,a=1,q=p$ and simplify the right-hand side in terms of the Hurwitz zeta function using entry (4) in table below (64:12:7) in \cite{atlas}. Next apply L'Hopital's rule to the right-hand side as $k\to-1$ and simplify using entry (1) in table below (64:12:7) and equation (64:4:1) in \cite{atlas}.
(\end{proof}

\section{Derivation of new entry 3.1.3.65 in \cite{prud}}

\begin{proposition}
For $-1<Re(m)\leq -1/2, -1<Im(m)<-1/2,-1<Re(t)\leq -1/2, -1<Im(t)<-1/2$,
\begin{dmath}\label{eq2f1}
\int_{0}^{\infty}\int_{0}^{\infty}\frac{e^{-\frac{x^2+x y+y^2}{4 y}} y^{-m-t-1} \left(y^m x^t-x^m y^t\right)}{\log \left(\frac{x}{y}\right)}dxdy
=2 i \sqrt{3} \left(2
   e^{\frac{2 i \pi  m}{3}} \, _2F_1\left(\frac{1}{3},1;\frac{4}{3};e^{2 i m \pi }\right)-e^{\frac{4 i \pi  m}{3}} \,
   _2F_1\left(\frac{2}{3},1;\frac{5}{3};e^{2 i m \pi }\right)-2 e^{\frac{2 i \pi  t}{3}} \, _2F_1\left(\frac{1}{3},1;\frac{4}{3};e^{2 i \pi 
   t}\right)+e^{\frac{4 i \pi  t}{3}} \, _2F_1\left(\frac{2}{3},1;\frac{5}{3};e^{2 i \pi  t}\right)\right)
\end{dmath}
\end{proposition}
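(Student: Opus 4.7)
The plan is to specialize the master formula (\ref{dilf}) twice---once as stated and once with $m$ replaced by $t$---and to subtract. Setting $a=1$ and $q=p=1/4$ collapses the exponential kernel to $e^{-(x^{2}+xy+y^{2})/(4y)}$, and choosing $k=-1$ converts $\log^{k}(x/y)$ into $1/\log(x/y)$, so the difference of the two left--hand sides reads
\[
\int_{0}^{\infty}\!\!\int_{0}^{\infty}\frac{(x^{t}y^{-t-1}-x^{m}y^{-m-1})\,e^{-(x^{2}+xy+y^{2})/(4y)}}{\log(x/y)}\,dx\,dy,
\]
which, after factoring out $y^{-m-t-1}$ from the numerator, equals the integrand in (\ref{eq2f1}). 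The apparent singularity at $x=y$ is harmless because $(u^{t}-u^{m})/\log u=\int_{m}^{t} u^{s}\,ds$ with $u=x/y$ is smooth at $u=1$, so invoking $k=-1$ is legitimate by analytic continuation in $k$.

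On the right-hand side I would first evaluate the constants inside the Lerch functions. At $p=q=1/4$ one has $\cosh^{-1}(p/\sqrt{q})=\cosh^{-1}(1/2)=i\pi/3$ and $\sqrt{p^{2}-q}=i\sqrt{3}/4$; together with $\log q=-2\log 2=-2\log(2a)$, the compound arguments in (\ref{dilf}) telescope cleanly to $2/3$ and $1/3$. The Lerch pieces therefore reduce to $\Phi(e^{2i\pi m},1,2/3)$ and $\Phi(e^{2i\pi m},1,1/3)$. Applying the classical identity $\Phi(z,1,v)=v^{-1}\,{}_{2}F_{1}(1,v;v+1;z)$, which is immediate from (\ref{knuth:lerch}) after using $(v)_{n}/(v+1)_{n}=v/(v+n)$, turns these into $\tfrac{3}{2}\,{}_{2}F_{1}(1,2/3;5/3;\cdot)$ and $3\,{}_{2}F_{1}(1,1/3;4/3;\cdot)$.

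Next I would collect the overall prefactor. At $k=-1$, $q=1/4$ one has $q^{m/2}2^{m}=1$ and $\pi^{k+1}=1$, while $e^{\frac{1}{2}i\pi(k+2m)}=-ie^{im\pi}$, so $\bar{r}$ collapses to $4ie^{2im\pi/3}/\sqrt{3}$. Multiplying by the additional factor $e^{2m\cosh^{-1}(1/2)}=e^{2im\pi/3}$ sitting in front of the first Lerch term, and combining with the $v^{-1}$ coefficients from the hypergeometric rewrite, yields
\[
-2i\sqrt{3}\bigl(2e^{2im\pi/3}\,{}_{2}F_{1}(1/3,1;4/3;e^{2im\pi})-e^{4im\pi/3}\,{}_{2}F_{1}(2/3,1;5/3;e^{2im\pi})\bigr)
\]
for the $m$-equation; its $t$-analogue is obtained by replacing $m$ by $t$. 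Subtracting the $m$-equation from the $t$-equation then recovers the right-hand side of (\ref{eq2f1}) with leading coefficient $2i\sqrt{3}$.

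The main obstacle I anticipate is branch bookkeeping: $\sqrt{p^{2}-q}$ is purely imaginary at $p=q=1/4$, the inverse hyperbolic cosine must be taken on its principal branch, and the half-integer powers of $\pi$ together with the exponentials $e^{i\pi(\ldots)/2}$ must combine consistently for the rationals $1/3$ and $2/3$ to emerge. Once the hypergeometric representation of $\Phi(z,1,v)$ is in hand, the remainder is straightforward algebraic collection.
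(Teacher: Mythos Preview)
Your proposal is correct and follows essentially the same route as the paper: specialize (\ref{dilf}) at $k=-1$, $a=1$, $p=q=1/4$, rewrite $\Phi(z,1,v)$ via (9.559) in \cite{grad} (your identity $\Phi(z,1,v)=v^{-1}{}_{2}F_{1}(1,v;v+1;z)$), and take the difference of the $m$- and $t$-instances. The paper's proof is terse and does not spell out the subtraction step or the reduction of the Lerch arguments to $1/3$ and $2/3$; your write-up supplies exactly these details, and your prefactor and sign bookkeeping check out.
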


\begin{proof}
Use equation (\ref{dilf}) and set $k=-1,a=1,p=q=1/4$ and simplify using (9.559) in \cite{grad}.
\end{proof}

\section{Derivation of new entry 3.1.3.66 in \cite{prud}}

\begin{proposition}
\begin{dmath}
\int_{0}^{\infty}\int_{0}^{\infty}\frac{e^{-\frac{x^2+x y+y^2}{4 y}} \left(\sqrt[6]{y}-\sqrt[6]{x}\right)}{x^{2/3} \sqrt{y} \log \left(\frac{x}{y}\right)}dxdy=\log
   \left(\frac{\sec ^4\left(\frac{\pi }{9}\right)}{4 \left(\sin \left(\frac{\pi }{36}\right)+\cos \left(\frac{\pi
   }{36}\right)\right)^4}\right)
\end{dmath}
\end{proposition}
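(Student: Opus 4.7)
The plan mirrors the strategy of entry 3.1.3.64. First, the integrand decomposes as
\[
\frac{\sqrt[6]{y}-\sqrt[6]{x}}{x^{2/3}\sqrt{y}\,\log(x/y)}
=\frac{x^{-2/3}y^{-1/3}-x^{-1/2}y^{-1/2}}{\log(x/y)},
\]
which identifies the integrand as a difference of two instances of the $x^m y^{-m-1}/\log(x/y)$ family, with $m=-2/3$ and $m=-1/2$. Consequently, I would apply equation (\ref{dilf}) at two different exponents and subtract.

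Specifically, I would take equation (\ref{dilf}) with $a=1$ and $q=p$, replace $m\to t$ in a second copy, and form the difference. Setting $m=-2/3$, $t=-1/2$, $p=1/4$ and eventually $k=-1$ recovers the desired integrand on the left-hand side, since with $p=q=1/4$ the exponential factor reduces to $e^{-(x^2+xy+y^2)/(4y)}$. At $p=q=1/4$ we also have $\cosh^{-1}(p/\sqrt{q})=\cosh^{-1}(1/2)=i\pi/3$, which makes the shift parameters inside the Lerch functions collapse to rationals $v\in\{1/3,2/3\}$, while $z=e^{2i\pi m}$ takes the values $e^{-4i\pi/3}$ and $-1$ for the two choices of the exponent. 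To pass to $k=-1$, where the prefactor and the Lerch values combine into an indeterminate form, I would apply L'Hopital's rule as in entry 3.1.3.64, and then invoke entry (4) in the table below (64:12:7) in \cite{atlas} to rewrite each $\Phi(z,1,v)$ as a closed-form logarithm.

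The main obstacle will be the final trigonometric collapse: reducing the sum of four $\log(1-z\,e^{\pm 2\pi i v})$ contributions into the compact closed form with $\sec(\pi/9)$ and $\sin(\pi/36)+\cos(\pi/36)$. Since $\sin(\pi/36)+\cos(\pi/36)=\sqrt{2}\cos(2\pi/9)$, the target is essentially a logarithm of a product of powers of $\cos(\pi/9)$ and $\cos(2\pi/9)$, so after evaluating the complex logs and combining real parts, half-angle and product-to-sum identities should produce the stated form. Careful branch tracking of the complex logarithm will be essential to arrive at the real-valued final answer, especially because the $m=-2/3$ branch produces sixth roots of unity while the $t=-1/2$ branch produces second roots of unity, and the clean factored form emerges only after their contributions are paired correctly.
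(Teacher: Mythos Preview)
Your approach is essentially the same as the paper's. The paper applies its already-derived identity~(\ref{eq2f1}) (which packages exactly the ``take two copies of~(\ref{dilf}) with exponents $m,t$, set $a=1$, $p=q=1/4$, $k=-1$, and subtract'' step you describe) at $m=-1/2$, $t=-2/3$, then simplifies via the relation $\Phi(z,1,v)=v^{-1}\,{}_2F_1(v,1;v+1;z)$ from (9.559) in \cite{grad}; you instead plan to evaluate the $\Phi(z,1,v)$ terms directly as logarithms, which is an equivalent route to the same trigonometric closed form.

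One small correction: the L'Hopital step you anticipate is not actually needed here. In entry~3.1.3.64 the proof first collapses $\Phi(-1,-k,\cdot)$ to Hurwitz zeta functions, and it is the pole of $\zeta(s,\cdot)$ at $s=1$ that forces the limiting argument. If you keep the Lerch functions as they are (with $z=e^{2\pi i m}\in\{-1,e^{-4\pi i/3}\}$, neither equal to $1$), then $\Phi(z,-k,v)$ is perfectly regular at $k=-1$, and you may set $k=-1$ directly, exactly as the paper does when deriving~(\ref{eq2f1}). The only genuine subtlety is that each single-exponent integral diverges at $k=-1$ on the left-hand side, so the subtraction must be performed before specializing $k$; but on the right-hand side no indeterminacy arises.
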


\begin{proof}
Use equation (\ref{eq2f1}) and set $m=-1/2,t=-2/3$ and simplify using (9.559) in \cite{grad}.
\end{proof}

\section{Derivation of new entry 3.1.3.67 in \cite{prud}}

\begin{proposition}
\begin{dmath}
\int_{0}^{\infty}\int_{0}^{\infty}\frac{e^{-\frac{x^2+x y+y^2}{4 y}} \left(\sqrt[12]{y}-\sqrt[12]{x}\right)}{x^{3/4} \sqrt[3]{y} \log \left(\frac{x}{y}\right)}dxdy=2
   \left(\log \left(\frac{7}{4}-\sqrt{3}\right)+2 \log \left(\csc \left(\frac{\pi }{18}\right)\right)\right)
\end{dmath}
\end{proposition}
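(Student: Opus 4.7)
The approach is a direct specialization of the master identity (\ref{eq2f1}) at $m = -2/3$ and $t = -3/4$, paralleling the proof of the preceding proposition for entry 3.1.3.66 but with a different pair of parameters.

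The first step is to check that the left-hand side of (\ref{eq2f1}) reduces to the integrand in the proposition under this specialization. Expanding
\begin{equation*}
y^{-m-t-1}\bigl(y^m x^t - x^m y^t\bigr) = y^{-t-1} x^t - y^{-m-1} x^m
\end{equation*}
and substituting the chosen values gives $x^{-3/4} y^{-1/4} - x^{-2/3} y^{-1/3}$, which factors as $x^{-3/4} y^{-1/3}\bigl(y^{1/12} - x^{1/12}\bigr)$. This is exactly the numerator of the target integrand, so specializing (\ref{eq2f1}) produces the double integral on the left.

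The second step is to evaluate the right-hand side. The four hypergeometric functions that appear all have the form $\,_2F_1(a, 1; 1+a; z)$ with $z = e^{2i\pi m} = e^{-4i\pi/3}$ for the two $m$-terms and $z = e^{2i\pi t} = e^{-3i\pi/2} = i$ for the two $t$-terms. Using the identity $\,_2F_1(a, 1; 1+a; z) = a\,\Phi(z, 1, a)$ together with the reductions of $\Phi(z, 1, a)$ at roots of unity recorded in (9.559) of \cite{grad}, each hypergeometric value collapses to a finite combination of terms $\log(1 - \zeta)$ for $\zeta$ among the relevant twelfth roots of unity. The two $m$-terms at $e^{-4i\pi/3}$ are identical to the hypergeometric values that played the role of the $t$-contribution in the proof of entry 3.1.3.66, so that half of the work can be recycled verbatim; only the two $t$-terms at $z = i$ need to be evaluated afresh.

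The final step is the algebraic cleanup. After multiplying by $2i\sqrt{3}$ and collecting, the imaginary parts must cancel since the integral is real, and I would use $\log(1 - e^{i\theta}) = \log\bigl(2 \sin(\theta/2)\bigr) + i(\theta - \pi)/2$ to rewrite each complex logarithm as a real log-sine plus an imaginary phase. The main obstacle is recognizing the final compact form: the angle $\pi/18 = 10^\circ$ is not among the half-angles that arise directly from the arguments $\{2\pi/3, 4\pi/3, \pi/2, 3\pi/2\}$, so it must emerge through sine triple-angle or sum-to-product identities once the $m$- and $t$-contributions are combined. Similarly, the algebraic piece $7/4 - \sqrt{3} = \bigl((2-\sqrt{3})/2\bigr)^2$ is produced by using $2-\sqrt{3} = \tan(\pi/12)$ to consolidate contributions coming from the $m$-terms. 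Once the correct trigonometric identities are identified, the remaining simplification is mechanical.
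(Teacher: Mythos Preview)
Your proposal is correct and follows exactly the paper's approach: the paper's own proof of this proposition consists of the single instruction ``use equation~(\ref{eq2f1}) and set $m=-2/3,\,t=-3/4$ and simplify using (9.559) in \cite{grad},'' and that is precisely what you do, with the added benefit of explicitly verifying that the left-hand side specialization reproduces the stated integrand. Your observation that the $m$-terms here coincide with the $t$-terms of entry~3.1.3.66 is a nice efficiency that the paper does not make explicit.
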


\begin{proof}
Use equation (\ref{eq2f1}) and set $m=-2/3,t=-3/4$ and simplify using (9.559) in \cite{grad}.
\end{proof}

\section{Derivation of new entry 3.1.3.68 in \cite{prud}}

\begin{proposition}
\begin{dmath}
\int_{0}^{\infty}\int_{0}^{\infty}\frac{e^{-\frac{x^2+x y+y^2}{4 y}} \left(\sqrt[6]{y}-\sqrt[6]{x}\right)}{\sqrt{x} y^{2/3} \log \left(\frac{x}{y}\right)}dxdy=2 \log
   \left(\frac{1+\cos \left(\frac{\pi }{9}\right)}{4-4 \sin \left(\frac{\pi }{18}\right)}\right)
\end{dmath}
\end{proposition}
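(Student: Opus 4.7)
The plan is to apply the template identity (\ref{eq2f1}) directly, mirroring the proofs of Propositions 3.1.3.66 and 3.1.3.67. First I would rewrite the integrand as a two-term difference: expanding $(y^{1/6}-x^{1/6})/(x^{1/2}y^{2/3})$ yields $x^{-1/2}y^{-1/2} - x^{-1/3}y^{-2/3}$. Comparing this with the generic form $x^{t} y^{-t-1} - x^{m} y^{-m-1}$ appearing on the left-hand side of (\ref{eq2f1}), I read off the parameter assignment $t=-1/2$ and $m=-1/3$.

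Next I would substitute these values into the right-hand side of (\ref{eq2f1}). This produces four hypergeometric terms of the form ${}_2F_1(1/3,1;4/3;z)$ and ${}_2F_1(2/3,1;5/3;z)$, evaluated at $z=e^{-2i\pi/3}$ (from $m=-1/3$) and $z=-1$ (from $t=-1/2$). Since $m=-1/3$ lies outside the strip $\mathrm{Re}(m)\leq -1/2$ in which (\ref{eq2f1}) was originally derived, I would invoke the analytic continuation of the Lerch function used throughout the paper to justify the substitution at this parameter value.

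I would then evaluate the four hypergeometric functions in closed form using entry (9.559) of \cite{grad}, which expresses ${}_2F_1(1/3,1;4/3;z)$ and ${}_2F_1(2/3,1;5/3;z)$ in terms of elementary logarithms and inverse trigonometric functions of $z^{1/3}$ and $z^{2/3}$. Substituting $z=e^{-2i\pi/3}$ extracts sixth and ninth roots of unity, while $z=-1$ introduces square roots and twelfth roots; together these are the source of the angles $\pi/9$ and $\pi/18$ that appear in the stated answer.

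The main obstacle will be the final trigonometric bookkeeping: after collecting the four complex logarithms produced by (9.559), the imaginary parts from the two pairs must cancel and the surviving real part must be repackaged into the compact form $2\log\bigl((1+\cos(\pi/9))/(4-4\sin(\pi/18))\bigr)$. Because $\pi/9$ and $\pi/18$ are not classical angles, this collapse relies on the cube-root extraction $e^{-2i\pi/3}\mapsto e^{-2i\pi/9}$ combined with half-angle identities, and verifying the sign of the logarithm (since the integrand is manifestly negative on $\{x\neq y\}$) will provide a useful numerical cross-check.
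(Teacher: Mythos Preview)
Your proposal is correct and follows exactly the paper's own proof: specialize equation~(\ref{eq2f1}) with $m=-1/3$, $t=-1/2$ and then simplify the resulting hypergeometric terms via (9.559) in \cite{grad}. Your additional remarks about analytic continuation outside the nominal strip and the trigonometric bookkeeping are reasonable elaborations, but the core argument is identical to the paper's one-line proof.
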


\begin{proof}
Use equation (\ref{eq2f1}) and set $m=-1/3,t=-1/2$ and simplify using (9.559) in \cite{grad}.
\end{proof}

\section{Derivation of new entry 3.1.3.69 in \cite{prud}}

\begin{proposition}
\begin{dmath}
\int_{0}^{\infty}\int_{0}^{\infty}\frac{e^{-\frac{x^2+x y+y^2}{4 y}} \left(\sqrt[4]{y}-\sqrt[4]{x}\right)}{x^{3/4} \sqrt{y} \log \left(\frac{x}{y}\right)}dxdy=4 \log
   \left(4-2 \sqrt{3}\right)
\end{dmath}
\end{proposition}

\begin{proof}
Use equation (\ref{eq2f1}) and set $m=-1/2,t=-3/4$ and simplify using (9.559) in \cite{grad}.
\end{proof}

\section{Derivation of new entry 3.1.3.70 in \cite{prud}}

In this section we will look at the limiting case of equation \ref{dilf} when $p=q$ and apply L'Hopitals' rule as $q\to 1$ and express the integral in terms of the Hypergeometric function section (9.1) and equation (9.559) in \cite{grad}.

\begin{proposition}
For $-1<Re(m)\leq -1/2, -1<Im(m)<-1/2,-1<Re(t)\leq -1/2, -1<Im(t)<-1/2$,
\begin{dmath}
\int_{0}^{\infty}\int_{0}^{\infty}\frac{e^{-\frac{(x+2 y)^2}{4 y}} y^{-m-t-1} \left(y^m x^t-x^m y^t\right)}{\log \left(\frac{x}{y}\right)}dxdy
=\frac{2^m e^{i \pi  m} }{\pi }\left(2
   \pi  m \Phi \left(e^{2 i m \pi },1,\frac{\pi -i \log (2)}{2 \pi }\right)+i \Phi \left(e^{2 i m \pi },2,\frac{\pi -i \log (2)}{2 \pi
   }\right)\right)-2^t e^{i \pi  t} \left(2 \pi  t \Phi \left(e^{2 i \pi  t},1,\frac{\pi -i \log (2)}{2 \pi }\right)+i \Phi \left(e^{2 i \pi 
   t},2,\frac{\pi -i \log (2)}{2 \pi }\right)\right)
\end{dmath}
\end{proposition}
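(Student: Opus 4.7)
The plan is to specialize equation (\ref{dilf}) at $a = 1$ and $k = -1$, impose $p = q$, take the limit $q \to 1$ via L'Hopital's rule, and then form the difference of the resulting identity in $m$ with the same identity in $t$.

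At $a = 1$ and $k = -1$, the factor $\log^{k}(a x / y)$ in the integrand becomes $1/\log(x/y)$, matching the denominator of the target integrand. The condition $p = q = 1$ then reduces the exponential to $e^{-(x+2y)^2/(4y)}$, because $(x+2y)^2/(4y) = x^2/(4y) + x + y$. On the right-hand side of (\ref{dilf}), however, the situation is delicate: at $p = q$ one has $\sqrt{p^2 - q} = \sqrt{q(q-1)}$ and $\cosh^{-1}(p/\sqrt{q}) = \cosh^{-1}(\sqrt{q})$, both of which vanish as $q \to 1$; the two third arguments of the Lerch functions simultaneously coalesce at $A = (\pi - i\log 2)/(2\pi)$ (since $\log q \to 0$), and the prefactor $e^{2m \cosh^{-1}(\sqrt{q})} \to 1$. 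Thus the bracket in (\ref{dilf}) vanishes while $\bar{r}$ blows up, producing a $0/0$ indeterminacy. I would resolve it by expanding to first order in the small parameter $u = \cosh^{-1}(\sqrt{q})$, using $\sqrt{q^2 - q} = \sinh(u)\cosh(u) \sim u$ in the denominator of $\bar{r}$ and the Lerch derivative identity $\partial_v \Phi(z, s, v) = -s\,\Phi(z, s+1, v)$ (specialized to $s = 1$) to Taylor-expand the two $\Phi(\cdot, 1, \cdot)$ factors about the common base point $A$. The $\Phi(\cdot, 2, A)$ terms appearing in the target formula emerge precisely as these first $v$-derivatives.

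To obtain the combination $y^{-m-t-1}(y^m x^t - x^m y^t)$ in the integrand, I would then form the analogous limiting identity with $m$ replaced by $t$ and subtract: on the left this produces $x^t y^{-t-1} - x^m y^{-m-1} = y^{-m-t-1}(y^m x^t - x^m y^t)$, matching the stated integrand. The main obstacle I anticipate is careful bookkeeping of signs, factors of $\pi$, and phases: one must correctly track the contribution of $e^{i\pi(k + 2m)/2}$ at $k = -1$ (producing the $e^{im\pi}$ phase), the subleading piece $\log q = 2\log\cosh(u)$ in the third arguments of the two Lerch functions, and the relative sign between the two $\Phi$ contributions after expansion, so that the coefficients $2\pi m$ and $i$ inside the parentheses of the final formula emerge with the correct values. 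The hypergeometric identity (9.559) in \cite{grad} serves as an independent consistency check on the resulting Lerch values at $s = 1$.
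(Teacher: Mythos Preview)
Your proposal is correct and follows essentially the same route as the paper: specialize equation (\ref{dilf}) at $a=1$, $k=-1$, set $p=q$, resolve the $0/0$ indeterminacy as $q\to 1$ via L'Hopital's rule, and then subtract the identity at $t$ from the identity at $m$. Your expansion in $u=\cosh^{-1}(\sqrt{q})$ together with the derivative identity $\partial_v\Phi(z,s,v)=-s\,\Phi(z,s+1,v)$ is exactly the mechanism by which the $\Phi(\cdot,2,\cdot)$ terms are generated, and you have correctly identified how the combination $y^{-m-t-1}(y^m x^t - x^m y^t)$ arises from the difference on the left-hand side; the paper's own proof records the same steps but in much less detail.
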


\begin{proof}
Use equation (\ref{dilf}) and set $p=q4$ and apply L'Hopital's' rule to the right-hand side simplify. Next form a second integral by replacing $m\to t$ and take their difference.
\end{proof}

 \section{Summary table of results}
 
 \renewcommand{\arraystretch}{2.8}
 \FloatBarrier
\begin{table}[H]\vspace*{-3ex}
\begin{tabular}{ l  c }
  \hline			
  $f(x,y)$ & $\int_{0}^{\infty}\int_{0}^{\infty}f(x,y)dxdy$ \\ \hline
  $x^m y^{-m-1} e^{-p x-q y-\frac{x^2}{4 y}}$ & $\frac{\pi  2^{m+1} q^{m/2} \csc (\pi  m) \sinh \left(m \cosh
   ^{-1}\left(\frac{p}{\sqrt{q}}\right)\right)}{\sqrt{p^2-q}}$ \\ 
  $\frac{e^{-\frac{x^2+4 x y+y^2}{4 y}} \log ^2\left(\frac{x}{y}\right)}{\sqrt{x} \sqrt{y}}$ & $2 \sqrt{\frac{2}{3}} \pi  \left(\pi ^2+\cosh
   ^{-1}(2)^2\right)$ \\
  $\frac{e^{-\frac{x^2+x y+y^2}{4 y}} (x-y) \log ^k\left(\frac{x}{y}\right)}{\sqrt{x} y^{3/2}}$ & $i 2^{2 k+3} e^{\frac{i \pi  k}{2}} \pi
   ^{k+1} \left(\zeta \left(-k,\frac{1}{6}\right)+\zeta \left(-k,\frac{5}{6}\right)+\left(1-3^{-k}\right) \zeta (-k)\right)$ \\
  $\frac{e^{-\frac{x^2+x y+y^2}{4 y}} (x-y) \log \left(\frac{x}{y}\right) \log \left(\log \left(\frac{x}{y}\right)\right)}{\sqrt{x}
   y^{3/2}}$ & $\frac{4}{9} \pi ^2 \left(6+3 \pi  i+\log \left(\frac{2^{14} 3^3 \pi ^6}{A^{72}}\right)\right)$ \\
  $\frac{e^{-\frac{x^2+x y+y^2}{4 y}} \left(\sqrt[4]{y}-\sqrt[4]{x}\right)}{x^{3/4} \sqrt{y} \log \left(\frac{x}{y}\right)}$ & $4 \log
   \left(4-2 \sqrt{3}\right)$ \\
  $\frac{e^{-\frac{x^2+x y+y^2}{4 y}} \left(\sqrt[6]{y}-\sqrt[6]{x}\right)}{x^{2/3} \sqrt{y} \log \left(\frac{x}{y}\right)}$ & $\log
   \left(\frac{\sec ^4\left(\frac{\pi }{9}\right)}{4 \left(\sin \left(\frac{\pi }{36}\right)+\cos \left(\frac{\pi
   }{36}\right)\right)^4}\right)$ \\
  $\frac{e^{-\frac{x^2+x y+y^2}{4 y}} \left(\sqrt[12]{y}-\sqrt[12]{x}\right)}{x^{3/4} \sqrt[3]{y} \log \left(\frac{x}{y}\right)}$ & $2
   \left(\log \left(\frac{7}{4}-\sqrt{3}\right)+2 \log \left(\csc \left(\frac{\pi }{18}\right)\right)\right)$ \\
  $\frac{e^{-\frac{x^2+x y+y^2}{4 y}} \left(\sqrt[6]{y}-\sqrt[6]{x}\right)}{\sqrt{x} y^{2/3} \log \left(\frac{x}{y}\right)}$ & $2 \log
   \left(\frac{1+\cos \left(\frac{\pi }{9}\right)}{4-4 \sin \left(\frac{\pi }{18}\right)}\right)$ \\
  $\frac{e^{-\frac{x^2+x y+y^2}{4 y}} \left(\sqrt[4]{y}-\sqrt[4]{x}\right)}{x^{3/4} \sqrt{y} \log \left(\frac{x}{y}\right)}$ & $4 \log
   \left(4-2 \sqrt{3}\right)$
      \\[0.3cm]
  \hline  
\end{tabular}
\end{table}
 \FloatBarrier

\section{Discussion}

In this work the authors derived a double integral formula in terms of the Lerch function. This integral formula was then used to derive special cases in terms of fundamental constants and special functions. A table of integrals featuring some of the integral results was presented for the benefit of interested readers. We used Wolfram Mathematica to numerically verify the formulas for various ranges of the parameters for real and imaginary values. We will use our contour integral method to derive other double integrals and produce more tables of integrals in our future work.

\end{document}